\documentclass[11pt]{article}
\usepackage[utf8]{inputenc}
\usepackage[margin=1in]{geometry}

\usepackage{amsmath, amsthm, thmtools}

\usepackage{fourier}
\DeclareMathAlphabet{\mathcal}{OMS}{zplm}{m}{n}

\usepackage[dvipsnames]{xcolor}
\usepackage[pagebackref]{hyperref}
\hypersetup{
    colorlinks=true,      
    linkcolor=Green,       
    citecolor=NavyBlue,      
    filecolor=Plum,    
    urlcolor=Plum         
}
\usepackage{mathtools, amsfonts, amssymb, mathrsfs}
\usepackage{enumerate,enumitem}
\usepackage{subcaption}
\usepackage{soul}
\usepackage{algorithm, algpseudocode, algorithmicx}
\usepackage{comment}
\usepackage{xspace}
\usepackage{mleftright}

\newcommand{\mytitle}{A new analysis of the randomly pivoted Cholesky algorithm}

\renewcommand*{\backref}[1]{}
\renewcommand*{\backrefalt}[4]{%
	\ifcase #1 %
	(No citations.)
	\or
	(Cited page~#2.)
	\else
	(Cited pages~#2.)
	\fi
}

\usepackage[most]{tcolorbox}

\newcommand{\real}{\mathbb{R}}
\newcommand{\complex}{\mathbb{C}}

\DeclareMathOperator{\tr}{tr}
\DeclareMathOperator{\diag}{diag}

\newcommand{\mat}[1]{\boldsymbol{#1}}
\renewcommand{\vec}[1]{\boldsymbol{#1}}
\newcommand{\lowrank}[1]{\mleft\llbracket #1 \mright\rrbracket}
\newcommand{\norm}[1]{\mleft\| #1 \mright\|}

\newcommand{\Id}{\mathbf{I}}

\DeclareMathOperator{\expect}{\mathbb{E}}
\DeclareMathOperator{\prob}{\mathbb{P}}

\newcommand{\order}{\mathcal{O}}

\newcommand{\set}[1]{\mathsf{#1}}
\newcommand{\e}{\mathrm{e}}

\newcommand{\hatbold}[1]{\skew{4}\widehat{\smash{\boldsymbol{#1}}\mathstrut}}
\newcommand{\Ahat}{\smash{\hatbold{A}}}

\newcommand{\RPCholesky}{\textsc{RPCholesky}\xspace}
\renewcommand{\d}{\mathrm{d}}
\newcommand{\ridge}{\mu}
\newcommand{\otherdeff}[1]{\mathrm{d}_{\rm eff}(#1)}
\newcommand{\deff}{\otherdeff{\ridge}}
\newcommand{\dtail}{\mathrm{d}_{\rm tail}(\ridge)}

\usepackage[nameinlink,capitalise]{cleveref}
\usepackage{doi}

\makeatletter
\def\th@plain{%
  \thm@notefont{}
  \itshape 
}
\def\th@definition{%
  \thm@notefont{}
  \normalfont 
}
\makeatother

\crefname{equation}{}{}
\crefname{section}{section}{sections}
\crefname{appendix}{appendix}{appendices}
\newcommand*{\email}[1]{\href{mailto:#1}{\nolinkurl{#1}} } 

\declaretheorem[name=Theorem,numberwithin=section]{theorem}

\declaretheorem[name=Proposition,numberlike=theorem]{proposition}

\declaretheorem[name=Known Result,numberlike=theorem]{known}

\declaretheorem[name=Lemma,numberlike=theorem]{lemma}
\declaretheorem[name=Corollary,numberlike=theorem]{corollary}

\theoremstyle{remark}

\theoremstyle{definition}

\usepackage{mdframed}
\newmdtheoremenv{question}{Question}

\numberwithin{equation}{section}

\usepackage{titlesec}

\titleformat{\subsection}[runin]
  {\normalfont\normalsize\scshape\bfseries}{\thesubsection}{0.5em}{}[\textbf{.}]

  \titleformat{\subsubsection}[runin]
  {\normalfont\normalsize\scshape}{\textit{\thesubsubsection}}{0.5em}{}[.]

  \titleformat{\paragraph}[runin]
  {\normalfont\normalsize\bfseries\itshape}{\textit{\theparagraph}}{0.5em}{}[]

\title{\mytitle}
\author{Ethan N. W. Epperly\thanks{Department of Mathematics, University of California Berkeley (\href{mailto:eepperly@berkeley.edu}{eepperly@berkeley.edu}, \url{https://ethanepperly.com})}}
\date{\today}

\begin{document}

\maketitle

\begin{abstract}
  The randomly pivoted Cholesky algorithm is one of the leading methods for computing a low-rank approximation to a large positive-semidefinite matrix.
However, while it consistently achieves accuracy comparable to or better than competing methods of its type in experiments, its theoretical analysis lags somewhat behind other methods.
This paper closes this gap, proving that randomly pivoted Cholesky produces an approximation with expected error within a $1+\varepsilon$ factor of the optimal rank-$r$ approximation in $\order(r/\varepsilon + r\sqrt{\log r})$ steps.
This result nearly matches the optimal complexity $\Theta(r/\varepsilon)$ for any low-rank approximation method based on a partial Cholesky decomposition (also known as a column Nystr\"om approximation).
The paper also presents bounds on the randomly pivoted Cholesky trace and spectral-norm errors that hold with high probability.
The mathematical argument is largely due to GPT 5.6-Sol (Pro), with some refinements by the author.
\end{abstract}

\section{Introduction}

In recent years, the randomly pivoted Cholesky (\RPCholesky) algorithm \cite{MW19,CETW25} has established itself as one of the most effective methods for computing a low-rank approximation to a positive semidefinite (psd) matrix, consistently achieving better accuracy or lower computational cost than competitor methods \cite{DEF+23,CETW25,ETW25a}.
However, while the empirical performance has been strong, the error analysis of \RPCholesky has lagged somewhat behind other methods, leaving open the possibility that this algorithm could have a ``hidden weakness'' not visible in the current numerical work.
This paper resolves this question by providing error bounds for \RPCholesky that nearly achieve the best-possible error scaling for the psd low-rank approximation problem.
The mathematical argument is largely due to GPT 5.6-Sol (Pro), with some refinements by the author.

\subsection{\RPCholesky}

Let $\mat{A} \in \complex^{N\times N}$ be a psd matrix, defined as a matrix with nonnegative eigenvalues that is equal to its adjoint $\mat{A}^*$.
The \RPCholesky algorithm builds an approximation iteratively, beginning from the trivial approximation $\Ahat^{(0)} \coloneqq \mat{0}$.
At step $i$, the algorithm draws a random pivot index
\begin{equation*}
	\prob \bigl\{ s_i = t \bigr\} = \frac{\mat{A}(t,t) - \Ahat^{(i-1)}(t,t)}{\tr\bigl(\mat{A} - \Ahat^{(i-1)}\bigr)}
\end{equation*}
using the diagonal of the current residual $\mat{A} - \Ahat^{(i-1)}$ as sampling weights.
Then, it updates the approximation using the selected pivot column
\begin{equation} \label{eq:cholesky-update}
	\Ahat^{(i)} \coloneqq \Ahat^{(i-1)} + \vec{v}_i^{\vphantom{*}}\vec{v}_i^*/\vec{v}_i^{\vphantom{*}}\bigl(s_i^{\vphantom{*}}\bigr) \quad \text{for } \vec{v}_i^{\vphantom{*}} \coloneqq \mat{A}\bigl(:,s_i^{\vphantom{*}}\bigr) - \Ahat^{(i-1)}\bigl(:,s_i^{\vphantom{*}}\bigr).
\end{equation}

Each step of the \RPCholesky algorithm requires interacting with the diagonal and a single column $\mat{A}(:,s_i)$ of the matrix.
As such, the algorithm produces a rank-$k$ approximation after reading only $(k+1)N$ matrix entries, and, with appropriate implementation \cite{CETW25,ETW25a}, it requires only $\order(k^2N)$ arithmetic operations.
Treating the parameter $k$ as fixed, the algorithm is \emph{sublinear}, requiring only $\order(N)$ operations to produce an approximation to all $N^2$ entries of the matrix $\mat{A}$.
This computational profile makes the \RPCholesky method attractive for kernel matrix and Gaussian process computations \cite{SS02,RW05}, which involve large psd matrices $\mat{A} \in \complex^{N\times N}$ whose entries $\mat{A}(i,j) = \kappa(\vec{x}_i,\vec{x}_j)$ are specified by evaluating a kernel function $\kappa$ at pairs of elements from a data set $\{\vec{x}_i\}$.
The available numerical evidence suggests that \RPCholesky is among the fastest and most accurate methods for constructing a low-rank approximation to a psd matrix \cite{DEF+23,CETW25,ETW25a,Epp25a}, and the method has been shown to be useful for various tasks such as preconditioning \cite{DEF+23,LR25,BMS26,KW26}, quadrature \cite{EM23a,Epp25a}, matrix function approximation \cite{PMM25}, understanding chemical dynamics \cite{AJSW24a}, model reduction \cite{Oeh26}, and approximating the attention mechanism \cite{SM26}.

\subsection{Analysis: Trace error}

The \RPCholesky algorithm has proven effective and reliable in practice, but the existing analysis falls somewhat short of showing this.
The best-known bound is as follows.

\begin{known}[\RPCholesky: Existing analysis] \label{known:rpcholesky}
	Let $r\ge 1$ and $\varepsilon \in (0,1)$.
	The \RPCholesky algorithm achieves a trace error competitive with the best rank-$r$ approximation
	\begin{equation} \label{eq:expectation}
		\expect\bigl[\tr\bigl(\mat{A} - \Ahat^{(k)}\bigr)\bigr] \le (1+\varepsilon) \tr(\mat{A} - \lowrank{\mat{A}}_r).
	\end{equation}
	once the algorithm has been run for 
	\begin{equation*}
		k \ge \frac{r}{\varepsilon} + \min \left\{ r \log \left( \frac{1}{\varepsilon\eta}\right), \log(2) r^2 + r + r \log\left(\frac{1}{\varepsilon}\right) \right\} \text{ steps}.
	\end{equation*}
	Here, $\lowrank{\mat{A}}_r$ is the optimal rank-$r$ approximation to $\mat{A}$ in any unitarily invariant norm, and
	\begin{equation} \label{eq:cetw}
		\eta \coloneqq \tr(\mat{A} - \lowrank{\mat{A}}_r)/\tr(\mat{A}) \text{ is its relative error.}
	\end{equation}
\end{known}

This result shows that \RPCholesky produces an approximation that is comparable to the best rank-$r$ approximation when run for an appropriate number $k = k(r,\varepsilon)$ of steps.
It is known that any algorithm that produces a low-rank approximation by means of the partial Cholesky update \cref{eq:cholesky-update} requires $k \ge r/\varepsilon$ steps to satisfy the guarantee \cref{eq:expectation}, and the bound \cref{eq:expectation} is achieved with $k = r/\varepsilon + r - 1$ when the pivots $\{s_1,\ldots,s_k\}$ are drawn from a determinantal point process (DPP); see \cite[Thm.~6]{DM21a} and \cite[Fact~3.11]{Epp25a}.
Thus, when $\eta$ is large, \cref{known:rpcholesky} shows that \RPCholesky achieves error bounds that are nearly optimal within the class of partial Cholesky approximations (also called column Nystr\"om approximations \cite[sec.~19.2.1]{MT20b}).

\Cref{known:rpcholesky} looks less positive when we consider a matrix $\mat{A}$ with rapidly decaying eigenvalues.
For instance, if the eigenvalues of $\mat{A}$ decrease at a sufficiently rapid exponential rate, then this bound suggests it takes \RPCholesky roughly $0.7r^2$ steps to produce an approximation that is within a factor two of the best rank-$r$ approximation.
DPP sampling, by contrast, requires only $2r$ steps to meet this guarantee.
The analysis suggests a yawning chasm between these two methods. 
Yet, in every documented experiment, they always achieve similar levels of accuracy \cite{EM23a,CETW25}.

This paper closes this gap in the analysis of the \RPCholesky algorithm, showing that the method requires at most roughly $2r\sqrt{\log r}$ steps to produce an approximation comparable to the best rank-$r$ approximation.
In particular, we have the following theorem and corollary.

\begin{theorem}[\RPCholesky: New bound] \label{thm:main}
	Let $r\ge 1$ and $\varepsilon > 0$, and let $\Ahat^{(k)}$ denote the output of $k\ge r$ steps of \RPCholesky on $\mat{A}$.
	Then
	\begin{equation*}
		\prob \big\{ \tr\bigl(\mat{A} - \Ahat^{(k)}\bigr) \ge (1+\varepsilon) \tr(\mat{A} - \lowrank{\mat{A}}_r) \big\} \le \frac{k!}{(k-r)!} \cdot (1+\varepsilon)^{-(k-r)}.
	\end{equation*}
\end{theorem}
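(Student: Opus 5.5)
The plan is to bound the failure probability by a sum over pivot sequences, and to split each step of a sequence into a ``head'' contribution (from the top-$r$ eigenspace of $\mat{A}$) or a ``tail'' contribution. Each tail step will cost a factor $(1+\varepsilon)^{-1}$. At most $r$ steps can be head steps, and choosing and ordering them accounts for the combinatorial prefactor $k!/(k-r)! = r!\binom{k}{r}$.

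\emph{Monotonicity and the conditioning event.} Write $\tau \coloneqq \tr(\mat{A} - \lowrank{\mat{A}}_r)$ and $\mat{R}^{(i)} \coloneqq \mat{A} - \Ahat^{(i)}$. The residual is psd and decreasing in the Loewner order, so $\tr \mat{R}^{(i)}$ is nonincreasing in $i$. Hence on the failure event $F \coloneqq \{\tr \mat{R}^{(k)} \ge (1+\varepsilon)\tau\}$ every step satisfies $\tr \mat{R}^{(i-1)} \ge (1+\varepsilon)\tau$. I then write $\prob(F)$ as a sum over ordered pivot sequences $(s_1,\ldots,s_k)$ that stay inside $F$. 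The probability of a sequence is $\prod_{i} \mat{R}^{(i-1)}(s_i,s_i)/\tr \mat{R}^{(i-1)}$. Its numerator telescopes by the Schur complement identity to $\det \mat{A}(\set{S},\set{S})$, where $\set{S}=\{s_1,\ldots,s_k\}$.

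\emph{Splitting the numerator.} Decompose $\mat{A} = \mat{H} + \mat{T}$, where $\mat{H} = \lowrank{\mat{A}}_r$ has rank $r$ and $\mat{T}$ is psd with $\tr \mat{T} = \tau$. Expanding $\det(\mat{H}+\mat{T})(\set{S},\set{S})$ by multilinearity in the Gram columns gives a sum over subsets $\set{U}\subseteq \set{S}$ of mixed terms. Because $\operatorname{rank} \mat{H} = r$, only subsets with $|\set{U}|\le r$ survive, and each surviving term is bounded by $\det \mat{H}(\set{U},\set{U})\det \mat{T}(\set{S}\setminus\set{U},\set{S}\setminus\set{U})$ (Fischer-type inequality). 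I would rather organize this step by step, via the Gram representation $\mat{R}^{(i-1)}(t,t) = \norm{\vec{b}_t}^2$ with $\vec{b}_t$ split into head and tail components. Then at each step the pivot is either charged to the tail, giving a factor $\le \mat{T}$-residual$(s_i,s_i)/((1+\varepsilon)\tau)$, or to the head. Each head step lowers the rank of the head part of the residual by one, so at most $r$ steps are head-charged.

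\emph{Summing.} Fix the set of $r$ head positions and their order; there are at most $k!/(k-r)!$ such choices. I then sum over the pivots at tail positions. At each tail position, summing the tail residual diagonal over all pivots gives at most $\tr(\text{tail residual}) \le \tau$, and dividing by $\tr \mat{R}^{(i-1)} \ge (1+\varepsilon)\tau$ gives the factor $(1+\varepsilon)^{-1}$. At each head position I bound the conditional selection probability by at most one, summed over pivots. Multiplying over the $k-r$ tail steps gives the claimed bound $\frac{k!}{(k-r)!}(1+\varepsilon)^{-(k-r)}$.

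\emph{Main obstacle.} The delicate step is making the head/tail charging rigorous. The head component of the residual is not simply $\mat{H}$ restricted, since the Cholesky updates mix the two parts. I must show both that (i) the tail parts remain dominated by a psd matrix of trace at most $\tau$ throughout, and (ii) the head charges can be arranged so their sums over pivots are at most one per head step, with at most $r$ such steps. I would first try tracking the projector $\mat{\Pi}_i$ onto $\operatorname{span}\{\mat{A}^{1/2}\vec{e}_{s_j}\}$ and writing $\mat{R}^{(i)} = \mat{A}^{1/2}(\Id-\mat{\Pi}_i)\mat{A}^{1/2}$. I would then use the rank-$r$ Cauchy--Binet expansion above to define the head part as the contribution through the $r$-dimensional top eigenspace.
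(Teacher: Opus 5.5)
You have the setup right: summing over pivot sequences in the failure event, telescoping the numerators to $\det\mat{A}(\set{S},\set{S})$, and using monotonicity of the trace residual. The gap is the head/tail charging. It is the entire content of the bound, you leave it unproved, and with the natural split both (i) and (ii) are false.

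Let $\mat{P}_H$ project onto the top-$r$ eigenspace, and split $\vec{b}_t=(\Id-\mat{\Pi}_{i-1})\mat{A}^{1/2}\vec{e}_t$ into $\mat{P}_H\vec{b}_t$ and $(\Id-\mat{P}_H)\vec{b}_t$.
\begin{itemize}
\item \emph{(i) fails.} Take $N=2$, $r=1$, eigenvalues $a\gg b$, and rotate the eigenbasis so that some pivot $s$ has a Gram vector $\mat{A}^{1/2}\vec{e}_s$ with head and tail components of equal length. The one-step residual then has trace $(a+b)/2$, of which $(a+b)/4$ lies in the tail eigendirection. That is far more than $\tau=b$, so along this history neither the bound $\le\tau$ nor the factor $(1+\varepsilon)^{-1}$ holds.
\item \emph{(ii) fails.} A tail-charged pivot still alters the head part, and the rank of $\mat{P}_H(\Id-\mat{\Pi}_i)\mat{A}^{1/2}$ generically stays $r$. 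So expanding $\prod_i(\text{head}_i+\text{tail}_i)$ produces labelings with more than $r$ head steps, and nothing removes them.
\item \emph{The Fischer-type alternative also fails.} Mixed terms with different $\set{U}$ of the same size interfere rather than add. For $\mat{H}=\tfrac{\alpha}{2}\vec{1}\vec{1}^*$ and $\mat{T}=\tfrac{\beta}{2}\vec{v}\vec{v}^*$ with $\vec{v}=(1,-1)$ and $\alpha>\beta$, $\det(\mat{H}+\mat{T})=\alpha\beta$, but the diagonal mixed terms sum to only $\alpha\beta/2$.
\end{itemize}
So the claimed factorization into at most $r$ head steps of mass at most one and $k-r$ tail steps of mass at most $(1+\varepsilon)^{-1}$ is still unproved.

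The missing idea is that no per-step decomposition is needed. After the telescoping, bound the denominator product uniformly over every sequence in the failure event:
\begin{itemize}
\item $\tr\mat{R}^{(q)}\ge\tau_q\coloneqq\tr(\mat{A}-\lowrank{\mat{A}}_q)$ for $q<r$, by Eckart--Young;
\item $\tr\mat{R}^{(q)}\ge(1+\varepsilon)\tau_r$ for $r\le q<k$, by monotonicity.
\end{itemize}
This lower bound does not depend on the sequence, so you can pull it out of the sum and drop the event constraint. Summing the numerators over all ordered $k$-tuples then gives exactly $k!\,\mathrm{e}_k(\vec{\lambda})$. The elementary inequality $\mathrm{e}_k(\vec{\lambda})\le\mathrm{e}_r(\vec{\lambda})\,\mathrm{e}_{k-r}(\lambda_{r+1},\ldots,\lambda_N)\le\tau_0\cdots\tau_{r-1}\cdot\tau_r^{k-r}/(k-r)!$ cancels against the denominator.

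In this argument, $k!/(k-r)!$ is $k!$ orderings divided by the $(k-r)!$ gained when $\tau_r^{k-r}$ is expanded; it does not come from choosing head positions. The ``head'' cost is paid globally, by the first $r$ Eckart--Young denominators matching $\mathrm{e}_r(\vec{\lambda})\le\tau_0\cdots\tau_{r-1}$, rather than by a per-step probability bound.
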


\begin{corollary}[RPCholesky: Trace error] \label{cor:complexity}
	Let $r\ge 1$ and $\varepsilon,\delta \in (0,1)$, and let $\Ahat^{(k)}$ denote the output of $k$ steps of \RPCholesky on $\mat{A}$.
	Then 
	\begin{subequations} \label{eq:result}
		\begin{align}
			&\expect\bigl[\tr\bigl(\mat{A} - \Ahat^{(k)}\bigr)\bigr] \le (1+\varepsilon) \tr(\mat{A} - \lowrank{\mat{A}}_r) &&\text{when } k\ge \frac{r}{\varepsilon} + 2r\sqrt{\log r} \textcolor{gray}{+ r \log\left(\frac{1}{\varepsilon}\right) + 2.3r}, \label{eq:expectation-result} \\
			&\prob \bigl\{ \tr\bigl(\mat{A} - \Ahat^{(k)}\bigr) > (1+\varepsilon) \tr(\mat{A} - \lowrank{\mat{A}}_r) \bigr\} \le \delta &&\text{when } k\ge k_{\rm hp}(r,\varepsilon,\delta), \label{eq:whp-result}
		\end{align}
	\end{subequations}
	where $k_{\rm hp}$ is an explicit function defined below in \cref{eq:k_hp} that satisfies the following bound and asymptotics:
	\begin{align*}
		k_{\rm hp}(r,\varepsilon,\delta) \le \frac{4r}{\varepsilon} \log\left( \frac{4r}{\varepsilon} \right) + \frac{4}{\varepsilon} \log \left( \frac{1}{\delta} \right) \quad \text{and} \quad k_{\rm hp}(r,\varepsilon,\delta) \sim \frac{r}{\varepsilon} \log \left( \frac{r}{\varepsilon} \right) + \frac{1}{\varepsilon} \log \left(\frac{1}{\delta}\right) \text{ as } r\uparrow \infty \text{ and } \varepsilon,\delta \downarrow 0.
	\end{align*}
\end{corollary}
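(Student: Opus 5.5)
The plan is to derive both parts of \cref{cor:complexity} from \cref{thm:main}, used only as a tail bound. For \cref{eq:expectation-result} I would combine it with the existing analysis behind \cref{known:rpcholesky} in a two-stage argument. The high-probability statement \cref{eq:whp-result} is a direct inversion of \cref{thm:main}. Throughout, write $\mathrm{opt} \coloneqq \tr(\mat{A}-\lowrank{\mat{A}}_r)$ and $\mathrm{err}_k \coloneqq \tr(\mat{A}-\Ahat^{(k)})$. When $\mathrm{opt}=0$, \cref{thm:main} with $\varepsilon\to\infty$ forces $\mathrm{err}_k=0$ almost surely for $k>r$, so we may assume $\mathrm{opt}>0$.

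\emph{High probability.} By \cref{thm:main} and $k!/(k-r)! \le k^r$, it suffices that $r\log k + \log(1/\delta) \le (k-r)\log(1+\varepsilon)$. So I would define $k_{\rm hp}(r,\varepsilon,\delta)$ as the smallest $k$ satisfying this inequality. Using $\log(1+\varepsilon)\ge \varepsilon/2$ for $\varepsilon\in(0,1)$, the condition is implied by $k \ge r + (2/\varepsilon)\bigl(r\log k + \log(1/\delta)\bigr)$. The standard fact that $k \ge 2a\log(2a)+2b$ implies $k \ge a\log k + b$ then yields the stated bound $\tfrac{4r}{\varepsilon}\log\tfrac{4r}{\varepsilon} + \tfrac{4}{\varepsilon}\log\tfrac1\delta$, possibly after adjusting constants. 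The asymptotic form follows the same way using $\log(1+\varepsilon)\sim\varepsilon$.

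\emph{Expectation, stage one.} A direct integration of the tail bound only gives $k \approx (r/\varepsilon)\log r$, so I would split the run into $k=k_0+k_1$ steps. \RPCholesky is Markovian: steps $k_0+1,\dots,k_0+k_1$ are exactly \RPCholesky applied to the residual $\mat{A}' \coloneqq \mat{A}-\Ahat^{(k_0)}$. Moreover $\mat{0}\preceq\mat{A}'\preceq\mat{A}$, so $\tr(\mat{A}'-\lowrank{\mat{A}'}_r)\le\mathrm{opt}$. For stage one I would take $k_0 \approx r\sqrt{\log r}+\order(r)$ and use \cref{thm:main} to show that $\tr(\mat{A}')$ is at most about $\e^{\sqrt{\log r}}\,\mathrm{opt}$, in the integrated sense of controlling $\expect[\log(\tr\mat{A}'/\mathrm{opt})]$. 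Concretely, $\prob\{\tr\mat{A}'\ge s\,\mathrm{opt}\}\le k_0^r s^{-(k_0-r)}$, which becomes negligible once $\log s \gtrsim r\log k_0/(k_0-r)\approx\sqrt{\log r}$.

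\emph{Expectation, stage two.} I would rerun the argument behind \cref{known:rpcholesky}, conditioned on $\mat{A}'$, keeping the explicit form of its error bound rather than the packaged step count. I expect that proof to give, for $k_1$ steps,
\[
\expect[\mathrm{err}_{k}\mid\mat{A}'] \le (1+\varepsilon)\,\mathrm{opt} \quad\text{once}\quad k_1\ge \frac r\varepsilon + r\log\Bigl(\frac{\tr(\mat{A}')}{\varepsilon\,\mathrm{opt}}\Bigr),
\]
or more precisely a bound of the form $(1+\varepsilon)\mathrm{opt} + \exp\bigl(-(k_1-r/\varepsilon)/r\bigr)\tr(\mat{A}')$. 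Taking expectations over $\mat{A}'$ and inserting the stage-one tail bound makes the $\exp(\cdot)\tr(\mat{A}')$ term at most $\varepsilon\,\mathrm{opt}$. This holds once $k_1 \ge r/\varepsilon + r\sqrt{\log r} + r\log(1/\varepsilon) + \order(r)$. Adding $k_0$ reproduces $r/\varepsilon + 2r\sqrt{\log r} + r\log(1/\varepsilon) + 2.3r$. A final rescaling splits the budget $\varepsilon$ between the two terms, and monotonicity of $\mathrm{err}_k$ in $k$ extends the result to all larger $k$.

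The main obstacle is stage two. It requires extracting from the existing \RPCholesky analysis a bound that is linear in $\tr(\mat{A}')$, with the exponential factor in $k_1$, so that it can be averaged against the heavy-ish tail of $\tr(\mat{A}')$ from \cref{thm:main}. The constants must also be tuned so that the two $r\sqrt{\log r}$ contributions, and the $2.3r$ slack, come out as stated.
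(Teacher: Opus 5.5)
Your architecture matches the paper's. The high-probability bound inverts \cref{thm:main} using $k!/(k-r)!\le k^r$. The expectation bound uses a warm start of $r+\ell$ steps, $\ell=\lceil r\sqrt{\log r}\rceil$, followed by the contraction analysis behind \cref{known:rpcholesky}. Your high-probability argument is fine. It differs only in defining $k_{\rm hp}$ as the least integer solution rather than through $\mathrm{W}_{-1}$. If you keep the $-a$ in $a\log k\le a\log(2a)+k/2-a$, the extra $+r$ is absorbed because $a=2r/\varepsilon\ge r$, so the stated bound comes out exactly.

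\textbf{The gap is in stage two as you set it up.} Grant your linear bound $(1+\varepsilon_1)\mathrm{opt}+\e^{-(k_1-r/\varepsilon_1)/r}\tr\mat{A}'$, which can be extracted from that analysis. Averaging it and splitting $\varepsilon=\varepsilon_1+\varepsilon_2$ costs $r/\varepsilon_1+r\log(C/\varepsilon_2)$, where $C=\expect\tr\mat{A}'/\mathrm{opt}$. The best split is $\varepsilon_2=\varepsilon_1^2$, and it gives $r/\varepsilon+2r\log(1/\varepsilon)+r\log C+\order(r)$. So the stated threshold, with a single $r\log(1/\varepsilon)$, does not follow.

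The missing idea is that no linearization or budget splitting is needed. The expected one-step residual map $\Phi(\mat{B})=\mat{B}-\mat{B}^2/\tr\mat{B}$ is monotone and concave in the psd order. Hence $\expect[\mat{A}-\Ahat^{(k_1+k_2)}]\preceq\Phi^{\circ k_2}\bigl(\expect[\mat{A}-\Ahat^{(k_1)}]\bigr)$. Applying the deterministic analysis to the fixed matrix $\expect[\mat{A}-\Ahat^{(k_1)}]\preceq\mat{A}$ puts $\expect\tr(\mat{A}-\Ahat^{(k_1)})$ directly inside the logarithm. That is exactly the contraction bound the paper imports.

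Correspondingly, stage one must bound $\expect\tr\mat{A}'$, not $\expect\log(\tr\mat{A}'/\mathrm{opt})$. Since $\expect\log X\le\log\expect X$, the latter gives no control of the former. Integrating the tail of \cref{thm:main} gives $\expect\tr(\mat{A}-\Ahat^{(r+\ell)})\le\frac{\ell}{\ell-1}(r+\ell)^{r/\ell}\,\mathrm{opt}$ for $\ell\ge2$. The resulting total is $r+\ell+r/\varepsilon+r\log\bigl(\frac{\ell}{\ell-1}(r+\ell)^{r/\ell}/\varepsilon\bigr)$, and this is what simplifies to $r/\varepsilon+2r\sqrt{\log r}+r\log(1/\varepsilon)+2.3r$.

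\textbf{Small $r$.} The paper performs that simplification only for $r\ge6$, and some restriction is unavoidable. The warm start needs $\ell\ge2$ for the tail integral to converge. For $r=1$, where $2r\sqrt{\log r}=0$, the two-stage count is therefore at best about $1/\varepsilon+\log(1/\varepsilon)+4.24$, which exceeds the claimed $+2.3$. The paper covers $r\le5$ with the second branch of \cref{known:rpcholesky}, namely $\log(2)r^2+r+r\log(1/\varepsilon)$, which lies below $2r\sqrt{\log r}+2.3r$ in that range.

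\textbf{The case $\mathrm{opt}=0$.} Your reduction to $\mathrm{opt}>0$ is argued incorrectly. When $\mathrm{opt}=0$, the event in \cref{thm:main} is $\{\mathrm{err}_k\ge0\}$, which is certain, so that theorem cannot be invoked there; its proof divides by $\tau_r$. Instead, note that $\operatorname{rank}\mat{A}\le r$ and that each \RPCholesky step lowers the rank of the residual by one. Hence $\mathrm{err}_k=0$ for all $k\ge r$.
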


The main conclusion is that it takes \RPCholesky roughly at most $r/\varepsilon + 2r\sqrt{\log r}$ steps to produce an approximation within a $(1+\varepsilon)$ factor of the optimal rank-$r$ approximation, in expectation.
The lower-order terms, which are grayed out to distinguish them from the leading terms, are small as well.
Robert Webber suggested the following simpler variant of \cref{eq:expectation-result}, which can also be deduced from the proof:
\begin{equation}
	\expect\bigl[\tr\bigl(\mat{A} - \Ahat^{(k)}\bigr)\bigr] \le (1+\varepsilon) \tr(\mat{A} - \lowrank{\mat{A}}_r) \quad \text{when } k\ge \frac{r}{\varepsilon} + 4r\sqrt{\log (\e r)} \textcolor{gray}{+ r \log\left(\frac{1}{\varepsilon}\right)}.
\end{equation}

It is natural to wonder whether the $\sqrt{\log r}$ factor in this analysis is necessary.
The efforts of the author, over many years, and GPT 5.6 (Pro), over several prompts, have failed to produce an example where \RPCholesky requires $\omega(r)$ steps to achieve the guarantee \cref{eq:expectation} with constant $\varepsilon$.
On that basis, one may conjecture that \RPCholesky requires only $\order(r/\varepsilon)$ steps to achieve the guarantee \cref{eq:expectation}.

\subsection{Discussion: Trace error}

To compare the new and old bounds, we evaluate on a simple test matrix $\mat{A}$, chosen to be a kernel matrix on a regular $80\times 80$ grid of points on the unit square $[0,1]^2$.
We evaluate with both the squared-exponential (Gaussian) and Mat\'ern-5/2 kernels, and we set the bandwidth to 0.3, which was manually tuned so that the eigenvalues decayed roughly as $0.8^i$ for the squared-exponential kernel.
We added a small shift $\mat{A} \gets \mat{A} + 10^{-14}\lambda_{\rm max}(\mat{A})\Id$ to avoid numerical issues.
We compare the new bound (\cref{cor:complexity}) to the old bound (\cref{known:rpcholesky}).
For each value of $k$ and each bound, we optimize to find the value of $r$ and $\varepsilon$ that most tightly controls the error.

\begin{figure}
	\centering
	\includegraphics[width=0.99\linewidth]{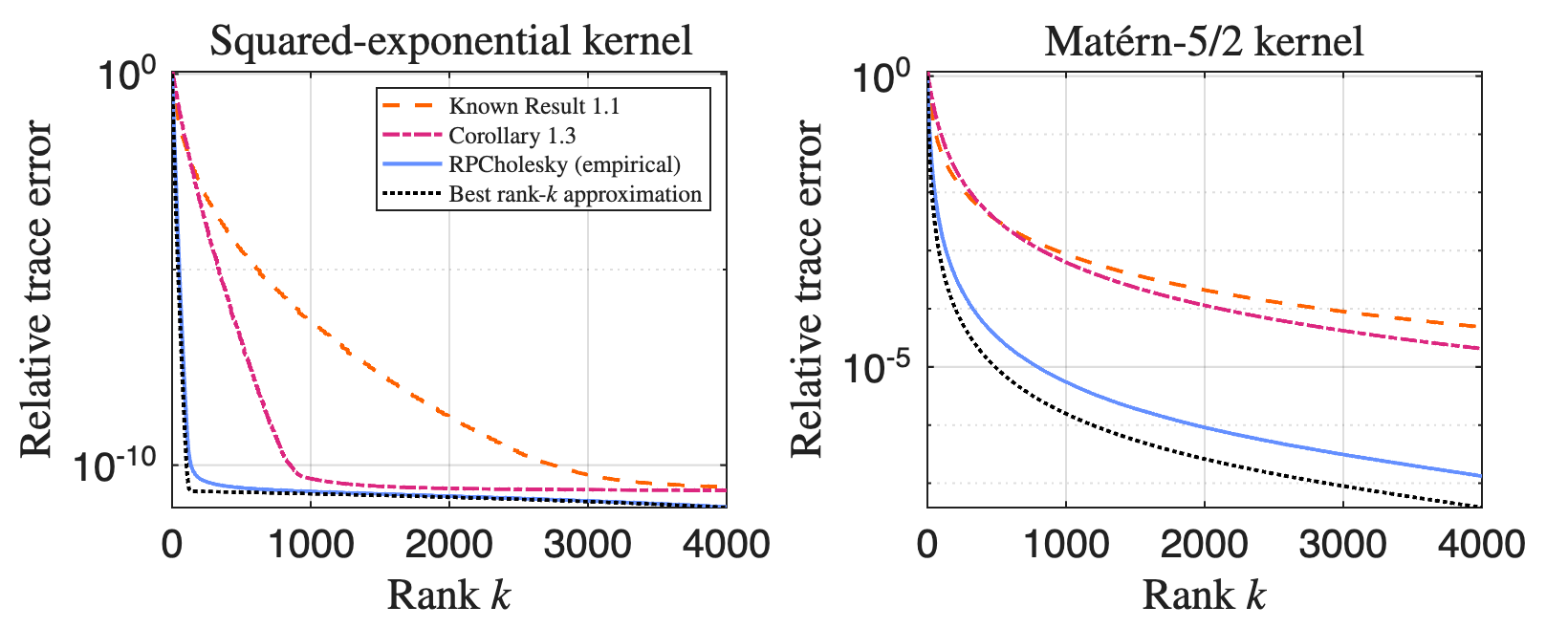}
	\caption{Relative trace error $\tr(\mat{A} - \Ahat) / \tr(\mat{A})$ of \RPCholesky (blue solid) as a function of the rank $k$ on two kernel matrices (described in text) and predictions by two error bounds: \cref{known:rpcholesky} (pink dash-dotted) and \cref{cor:complexity} (orange dashed).
		The error achieved by the best rank-$k$ approximation (black dotted) is shown for reference, and we plot the mean error for \RPCholesky over 10 trials.}
	\label{fig:trace}
\end{figure}

\Cref{fig:trace} shows the results.
For the squared-exponential kernel, the eigenvalues of $\mat{A}$ decrease at an exponential rate, and the trace error for \RPCholesky decreases exponentially as well.
\Cref{known:rpcholesky} fails to predict this behavior, instead predicting a root-exponential convergence profile of the form $\exp(-c\sqrt{k})$.
The new result (\cref{cor:complexity}) is better, correctly certifying that \RPCholesky converges exponentially.
While the new bound is qualitatively better, the bounds it produces are still far from quantitatively sharp.
For the Mat\'ern kernel, the new and old bounds are similar, with only slight improvements in \cref{cor:complexity} over \cref{known:rpcholesky}.

As \cref{fig:trace} demonstrates, none of the existing error bounds for \RPCholesky are quantitatively sharp.
Nonetheless, \cref{cor:complexity} provides a substantial improvement in some cases.

\subsection{Analysis: Spectral-norm error}

\Cref{cor:complexity} bounds the number of steps needed by \RPCholesky to produce an accurate approximation when measured using the trace error.
But for many purposes, we need stronger, spectral-norm guarantees.
The output $\Ahat$ of \RPCholesky is always bounded from above by $\mat{A}$ in the psd order: $\Ahat \preceq \mat{A}$.
We say that $\Ahat$ is a \emph{$\mu$-spectral approximation} to $\mat{A}$ if we have a matching upper bound, up to an additive factor:
\begin{equation} \label{eq:spectral}
	\Ahat \preceq \mat{A} \preceq \Ahat + \ridge \Id.
\end{equation}
In view of the relation $\Ahat \preceq \mat{A}$, the spectral approximation property \cref{eq:spectral} is equivalent to a bound $\norm{\mat{A} - \Ahat} \le \ridge$ on the error in the spectral norm.

To investigate the number of \RPCholesky steps needed to achieve the guarantee \cref{eq:spectral}, we introduce the \emph{$\ridge$-effective dimension}
\begin{equation*}
	\deff \coloneqq \tr \big[\mat{A} (\mat{A} + \ridge \Id)^{-1}\big] = \sum_{i=1}^N \frac{\lambda_i(\mat{A})}{\lambda_i(\mat{A}) + \ridge},
\end{equation*}
which is standard in the analysis of psd low-rank approximation algorithms \cite{MM17,RCCR18,FTU23}.
The $\mu$-effective dimension is a smoothed count of the number of eigenvalues larger than $\mu$, with large eigenvalues $\lambda_i(\mat{A}) \gg \ridge$ contributing nearly 1 to the sum and small eigenvalues $\lambda_i(\mat{A}) \ll \ridge$ contributing nearly 0.
A good low-rank approximation algorithm achieves the guarantee \cref{eq:spectral} with rank $k = \order(\deff)$ or $k = \order(\deff \log \deff)$.
Our second main result shows \RPCholesky achieves such a guarantee.

\begin{theorem}[\RPCholesky: Spectral-norm error] \label{thm:spectral}
	Let $\mat{A}$ be psd, $\ridge > 0$ be a real number, and $\delta \in (0,1)$.
	Then the output of \RPCholesky is a $\ridge$-spectral approximation to $\mat{A}$ with probability at least $1-\delta$ after $k \ge 30\deff \log(16\deff / \delta)$ steps.
\end{theorem}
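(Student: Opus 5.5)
Our strategy is to reduce the spectral-norm guarantee to a trace-type statement about a ``regularized'' residual and then apply a supermartingale/Chernoff argument in the spirit of \cref{thm:main}. Write $\mat{E}^{(i)} \coloneqq \mat{A} - \Ahat^{(i)}$ for the residual after $i$ steps. Since $\mat{E}^{(i)} \succeq \mat{0}$ and the residuals are nonincreasing in the psd order, the event $\norm{\mat{E}^{(k)}} \le \ridge$ is implied by $\tr[\mat{E}^{(k)}(\mat{E}^{(k)}+\ridge\Id)^{-1}] < 1/2$: indeed, if $\lambda_{\max}(\mat{E}^{(k)}) > \ridge$ then this trace exceeds $1/2$. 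We therefore track the potential $\Phi_i \coloneqq \tr[\mat{E}^{(i)}(\mat{E}^{(i)}+\ridge\Id)^{-1}]$, the effective dimension of the residual. We have $\Phi_0 = \deff$, and $\Phi_i$ is nonincreasing.

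The key step is a one-step expected-decrease inequality. Each step replaces $\mat{E}$ by $\mat{E} - \mat{E}\vec{e}_s\vec{e}_s^*\mat{E}/\mat{E}(s,s)$, a rank-one downdate. Writing $\vec{u} = \mat{E}\vec{e}_s/\sqrt{\mat{E}(s,s)}$, the function $f(\mat{E}) = \tr[\mat{E}(\mat{E}+\ridge\Id)^{-1}] = N - \ridge\tr[(\mat{E}+\ridge\Id)^{-1}]$ decreases, by Sherman--Morrison, by exactly
\begin{equation*}
	\ridge\,\frac{\vec{u}^*(\mat{E}+\ridge\Id)^{-2}\vec{u}}{1-\vec{u}^*(\mat{E}+\ridge\Id)^{-1}\vec{u}}.
\end{equation*}
Averaging over $s$ with probability $\mat{E}(s,s)/\tr\mat{E}$ and dropping the denominator (which lies in $(0,1]$) gives
\begin{equation*}
	\expect[\Phi_{i}-\Phi_{i+1}\mid \mathcal{F}_i] \ge \frac{\ridge\tr[\mat{E}^3(\mat{E}+\ridge\Id)^{-2}]}{\tr\mat{E}}.
\end{equation*}
The main obstacle is lower-bounding this ratio by roughly $c\,\Phi_i/\Phi_0$, which would give geometric decay with rate $1-c/\deff$. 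Splitting the eigenvalues of $\mat{E}$ into those above $\ridge$ and those below, the numerator controls $\sum_{\lambda>\ridge}\lambda$ and $\sum_{\lambda\le\ridge}\lambda^3/\ridge^2$. The difficulty is the small eigenvalues, whose mass in $\tr\mat{E}$ is not helpful. To handle it, I would try to bound the tail trace $\sum_{\lambda\le\ridge}\lambda$ by $\ridge\deff$ plus the contribution from large eigenvalues, using $\mat{E}\preceq\mat{A}$ and $\tr\mat{A}\le \ldots$. This yields progress at least of order $\min\{1,\Phi_i/\deff\}$ as long as $\Phi_i \ge 1/2$.

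If the direct bound fails, I would fall back on a two-phase argument. First, use the trace analysis (\cref{thm:main}, applied with ridge-adapted rank $r\approx 2\deff$) to make $\tr\mat{E}\lesssim \ridge\deff$. Then observe that $\Phi_i \ge \tfrac12 \cdot \#\{\lambda_j(\mat{E}^{(i)})>\ridge\}$, and that the ratio above is at least $\lambda_{\max}$-weighted progress of order $1/\deff$ per step. Either way, we conclude $\expect[\Phi_{i+1}\mid\mathcal{F}_i] \le (1-c/\deff)\Phi_i$ on the event $\Phi_i\ge 1/2$. Iterating gives $\expect[\Phi_k\mathbb{1}\{\text{not yet done}\}] \le \deff\,\e^{-ck/\deff}$.

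Markov's inequality then gives failure probability at most $2\deff\,\e^{-ck/\deff}\le\delta$ once $k\ge c^{-1}\deff\log(2\deff/\delta)$. We finish by checking that the constants ($c=1/30$ and the factor $16$ inside the logarithm) absorb the losses from the eigenvalue splitting and the threshold $1/2$.
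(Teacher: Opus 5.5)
There is a genuine gap, and it lies in the choice of potential. With success threshold $1/2$, the residual effective dimension $\Phi_i$ measures trace, not spectral norm. If $\norm{\mat{E}}\le\ridge$, each summand satisfies $\lambda/(\lambda+\ridge)\ge\lambda/(2\ridge)$, so $\Phi\ge\tr\mat{E}/(2\ridge)$. Hence $\Phi_k<1/2$ forces $\tr(\mat{A}-\Ahat^{(k)})<\ridge$. By \cref{eq:eckart-young} this is impossible unless $k\ge\dtail$. For $\lambda_i(\mat{A})=i^{-2}$ that means $\Omega(\ridge^{-1})$ steps, while the theorem (for fixed $\delta$) allows only $\order(\ridge^{-1/2}\log(1/\ridge))$.

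So the contraction $\expect[\Phi_{i+1}\mid\mathcal{F}_i]\le(1-c/\deff)\Phi_i$ on $\{\Phi_i\ge 1/2\}$, which both of your branches end with, is false in general. No Markov bound on $\Phi_k$ can give the result.

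Your own one-step formula shows the mechanism once it is corrected. The averaged Sherman--Morrison decrease is at least $\ridge\tr[\mat{E}^2(\mat{E}+\ridge\Id)^{-2}]/\tr\mat{E}$, with $\mat{E}^2$ rather than $\mat{E}^3$; your version is not even dimensionless. An eigenvalue $\lambda\ll\ridge$ contributes about $\lambda/\ridge$ to $\Phi$ but only about $\lambda^2/(\ridge\tr\mat{E})$ to this bound. Its share of $\Phi$ therefore contracts at rate about $\lambda/\tr\mat{E}$. Even after a warm start with $\tr\mat{E}\le 4\ridge\deff$, that rate is only about $(\lambda/\ridge)/(4\deff)\ll 1/\deff$.

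The fallback branch does not avoid this. After the warm start, $\Phi$ can still be dominated by eigenvalues below $\ridge$. The drift you can extract from eigenvalues above $\ridge$ is about $1/(16\deff)$ per such eigenvalue per step, and via an additive-drift argument that yields only an $\order(\deff^2)$ step count.

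The missing idea is a potential that ignores the spectrum below a constant fraction of $\ridge$ yet still detects an eigenvalue above $\ridge$. The paper uses $\tr\psi(\mat{E})$ with the ramp $\psi(t)=(t-\ridge/2)_+$. This quantity exceeds $\ridge/2$ whenever $\norm{\mat{E}}>\ridge$ and is unaffected by eigenvalues below $\ridge/2$.

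Because $\psi$ is convex with $\psi(0)=0$, no explicit rank-one update formula is needed. Write the new residual as $\mat{E}^{1/2}(\Id-\vec{u}\vec{u}^*)\mat{E}^{1/2}$ with $\vec{u}=\mat{E}^{1/2}\vec{e}_s/\sqrt{\mat{E}(s,s)}$, and apply Jensen's trace inequality. This gives $\expect\tr\psi(\mat{E}')\le\tr\psi(\mat{E})-\tr(\mat{E}\psi(\mat{E}))/\tr\mat{E}\le(1-\ridge/(2\tr\mat{E}))\tr\psi(\mat{E})$, using $t\psi(t)\ge(\ridge/2)\psi(t)$.

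Your warm-start idea is exactly the paper's first phase: \cref{eq:whp-result} with $r=\lceil 2\deff\rceil$ and $\varepsilon=1$ gives $\tr\mat{E}\le 4\ridge\deff$ with high probability. After that, the ramp potential starts at most $4\ridge\deff$ and contracts by a factor $1-1/(8\deff)$ per step. Markov's inequality against the threshold $\ridge/2$ then yields the $\order(\deff\log(\deff/\delta))$ bound.
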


I have chosen not to optimize the constants in this analysis to make the argument cleaner.
The $k \sim \deff \log \deff$ scaling is necessary, not an artifact of the proof.
Here is an informal argument.
Consider a diagonal matrix $\mat{A} = 2\mu \cdot \diag(1,\ldots,1,r/(N-r),\ldots,r/(N-r))$, where ``$1$'' is repeated $r$ times.
Consider the limiting behavior of \RPCholesky on this example when $N\uparrow \infty$.
The effective dimension of this matrix is $\deff \sim (8/3)r$, and all of the ``large'' diagonal entries $2\mu$ must be selected by the algorithm to achieve the spectral guarantee \cref{eq:spectral}.
At the beginning of the algorithm, each step has a $\sim r/(2r) = 1/2$ probability of selecting a large entry, so it takes $\sim 2$ iterations in expectation to select the first large diagonal entry.
After the first large entry is chosen, then the probability of choosing a large entry is $\sim(r-1)/(2r-1)$, so an expected $\sim (2r-1)/(r-1)$ steps are needed to select the next large entry.
Continuing in this way, we see that an expected $\sim 2r/r + (2r-1)/(r-1) + \cdots + (r+1)/1 = \Theta(r\log r) = \Theta(\deff \log \deff)$ steps are needed to select all large diagonal entries.
This argument may be formalized and strengthened to show that \RPCholesky requires $\Omega(\deff \log \deff)$ steps to achieve the guarantee \cref{eq:spectral} with any constant success probability; I omit the details.

\subsection{Discussion: Spectral-norm error}

The spectral-norm bound \cref{eq:spectral} has a number of applications; see \cite[App.~E]{MM17} for several examples of how the guarantee \cref{eq:spectral} can be used to analyze low-rank approximation-accelerated versions of various kernel machine learning algorithms.

Another application is preconditioning \cite{DEF+23}.
Suppose we wish to solve the linear system $(\mat{A} + \ridge \Id) \vec{x} = \vec{b}$.
To do so, we may apply conjugate gradient with the Nystr\"om preconditioner $\Ahat + \ridge \Id$ using the approximation $\Ahat$ generated by \RPCholesky.
In experiments reported in \cite{DEF+23,ETW25a}, this strategy was shown to be among the best-available preconditioners for such regularized psd linear systems.

How many steps do we need to run \RPCholesky to guarantee the quality of this approach?
Held back by the lack of a spectral-norm bound, the existing analysis \cite[Thm.~1]{DEF+23} only certifies that \RPCholesky controls the preconditioner when run for $\dtail(1 + \log(\tr\mat{A} / \ridge))$ steps, where the tail rank is $\dtail \coloneqq \min \{ r : \sum_{i > r} \lambda_i(\mat{A}) \le \ridge \}$.
This parameter can be much larger than the effective dimension when the eigenvalues decay at a slow polynomial rate.
Indeed:
\begin{equation*}
	\text{When } \lambda_i(\mat{A}) = i^{-2} \text{ for } i =1,\ldots,N, \quad \text{it holds that } \dtail = \Theta(\ridge^{-1}) \text{ and } \deff = \Theta(\ridge^{-1/2}).
\end{equation*}
The tail rank is quadratically larger than the effective dimension in this case.
By using \cref{thm:spectral}, we see that running for $\order(\deff \log \deff)$ steps suffices to control the condition number, significantly improving the analysis of \RPCholesky preconditioning in the slow eigenvalue decay regime.

\begin{corollary}[\RPCholesky preconditioning] \label{cor:preconditioning}
	Let $\ridge > 0$ and $\delta \in (0,1)$, and let $\Ahat$ denote the approximation produced by running \RPCholesky on $\mat{A}$ for $k\ge 30\deff \log(16\deff/\delta)$ steps.
	Then, with probability at least $1-\delta$, the \RPCholesky preconditioner $\Ahat + \ridge \Id$ controls the condition number:
	\begin{equation*}
		\operatorname{cond}\big((\Ahat + \ridge \Id)^{-1/2}(\mat{A} + \ridge \Id)(\Ahat + \ridge \Id)^{-1/2}\big) \le 2 \quad \text{where } \operatorname{cond}(\mat{M}) \coloneqq \lambda_{\max }(\mat{M}) / \lambda_{\rm min}(\mat{M}).
	\end{equation*}
\end{corollary}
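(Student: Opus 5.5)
The corollary is deduced directly from \cref{thm:spectral}, which states that with probability at least $1-\delta$ the output satisfies $\Ahat \preceq \mat{A} \preceq \Ahat + \ridge\Id$. I would prove only the deterministic statement: whenever this two-sided psd inequality holds, the preconditioned matrix has condition number at most $2$. The probability $1-\delta$ then carries over unchanged.

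Set $\mat{P} \coloneqq \Ahat + \ridge\Id$. Because $\Ahat$ is psd and $\ridge>0$, $\mat{P}$ is positive definite, so $\mat{P}^{-1/2}$ exists. Adding $\ridge\Id$ to each side of the spectral approximation property \cref{eq:spectral} gives
\begin{equation*}
	\mat{P} \preceq \mat{A} + \ridge\Id \preceq \Ahat + 2\ridge\Id \preceq 2\Ahat + 2\ridge\Id = 2\mat{P},
\end{equation*}
where the last inequality uses $\Ahat \succeq \mat{0}$. The psd order is preserved under congruence $\mat{X}\mapsto \mat{P}^{-1/2}\mat{X}\mat{P}^{-1/2}$, so conjugating by $\mat{P}^{-1/2}$ yields
\begin{equation*}
	\Id \preceq \mat{P}^{-1/2}(\mat{A}+\ridge\Id)\mat{P}^{-1/2} \preceq 2\Id.
\end{equation*}
Hence $\lambda_{\min}\ge 1$ and $\lambda_{\max}\le 2$, so the condition number is at most $2$.

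There is no real obstacle in this argument. The only step needing care is the bound $\ridge\Id \preceq \Ahat + \ridge\Id$, which uses the positive semidefiniteness of the \RPCholesky output. That holds because each update in \cref{eq:cholesky-update} adds a psd rank-one term; the step is skipped if the residual trace vanishes, in which case $\Ahat=\mat{A}$ and the claim is trivial.
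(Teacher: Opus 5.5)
Your proposal is correct and follows the paper's proof essentially verbatim: invoke \cref{thm:spectral}, chain $\Ahat + \ridge\Id \preceq \mat{A} + \ridge\Id \preceq \Ahat + 2\ridge\Id \preceq 2(\Ahat + \ridge\Id)$, and conjugate by $(\Ahat + \ridge\Id)^{-1/2}$. Your remark that the last step needs $\Ahat \succeq \mat{0}$, with its justification from the update rule, fills in a detail the paper leaves implicit.
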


The constants in \cref{cor:preconditioning} are larger than those in the known result \cite[Thm.~1]{DEF+23}, so this new bound is not always quantitatively sharper than the existing result.
Nonetheless, it is comforting to know that \RPCholesky preconditioning requires only $\order(\deff \log \deff)$ steps to guarantee control on the condition number, matching peer alternative preconditioning strategies \cite{MM17,FTU23} up to logarithmic factors.

\begin{proof}[Proof of \cref{cor:preconditioning}]
	By \cref{thm:spectral}, \RPCholesky achieves the guarantee $\Ahat\preceq \mat{A} \preceq \Ahat + \ridge \Id$ after the stated number of steps.
	Therefore, $\Ahat + \mu \Id \preceq \mat{A} + \mu \Id \preceq \Ahat + 2\mu \Id \preceq 2 (\Ahat + \mu \Id)$.
	The stated guarantee follows by conjugating these relations by $(\Ahat + \mu \Id)^{-1/2}$.
\end{proof}

\subsection{Extensions}

Finally, we note a few immediate extensions.
The \RPCholesky method is algebraically equivalent to the earlier randomly pivoted QR (adaptive sampling) algorithm \cite{DRVW06,DV06,CETW25,Epp25a}; see \cite[secs.~2.6 and 9.2]{Epp25a}.
Using this connection, \cref{thm:main,cor:complexity} yield immediate analogs for randomly pivoted QR.
The techniques here also should immediately generalize to the continuous setting \cite{EM23a,Epp25a}, giving improved bounds for low-rank approximation of positive definite kernel functions with applications to kernel-based active learning methods and quadrature on reproducing kernel Hilbert spaces.

\subsection{AI use}

\Cref{known:rpcholesky} was the result of months of focused human attention \cite{CETW25}. 
After the release of the original manuscript, I continued studying the \RPCholesky algorithm with collaborators for the remainder of my PhD \cite{EM23a,ETW25a,Epp25a}.
Proving a result like \cref{cor:complexity} was my research ``white whale'' for many years.
I know other researchers searched for such a result as well.

Given this amount of attention, I found it startling when GPT 5.6-Sol (Pro) produced \cref{thm:main} and a version of \cref{cor:complexity} in just over two hours.
The original argument supplied an $\order(r/\varepsilon + r\log r)$ complexity, and the author discovered the easy improvement to a complexity of $\order(r/\varepsilon + r\sqrt{\log r})$.
\Cref{thm:spectral} was, in essence, also produced from a single prompt.
ChatGPT Codex was used to help design and run numerical experiments, and AI tools were used to help in proofreading and literature search.
The manuscript was written by the author, who assumes full responsibility for the manuscript.

Now is truly a disorienting time for researchers in the mathematical sciences.
Just months ago, the state of the art largely consisted of human experts working interactively with AI systems, combining the skill and knowledge of humans with the explorative power of AI models; see \cite{Col26,DD26} for two examples in computational mathematics.
Now, AI systems are powerful enough that they can solve nontrivial research questions---a favorite research question worked on for many years, no less---in a single prompt.

\subsection{Outline}
The rest of this paper proves these results, with the proof of \cref{thm:main} in \cref{sec:proof}, the proof of \cref{cor:complexity} in \cref{sec:corollary}, and the proof of \cref{thm:spectral} in \cref{sec:spectral}.

\section{Proof of \cref{thm:main}}
\label{sec:proof}

The proof is evocative of Deshpande and Vempala's analysis of the randomly pivoted QR (adaptive sampling) method \cite[Prop.~1]{DV06} by bounding the probability of selecting any particular pivot sequence.
Let $\set{s} \coloneqq (s_1,\ldots,s_k)$ denote the random (ordered) tuple of pivots selected by the algorithm.
Throughout, we shall use $\set{t} = (t_1,\ldots,t_k)$ to denote some tuple of indices.
For any $\set{t}$, we let $\Ahat^{(q)}_{\set{t}}$ be the approximation produced by $q$ steps of the update rule \cref{eq:cholesky-update} with the pivots $\set{t}$, and we introduce the sequence of trace errors:
\begin{equation*}
	\varrho_q(\set{t}) \coloneqq \tr\bigl(\mat{A} - \Ahat^{(q)}_{\set{t}}\bigr) \quad \text{for } q=1,\ldots,k.
\end{equation*}
In particular, $\varrho_q(\set{s})$ are the random sequence of trace errors for the \RPCholesky algorithm.
Also, let
\begin{equation*}
	\tau_q \coloneqq \tr\big(\mat{A} - \lowrank{\mat{A}}_q\big)
\end{equation*}
denote the error of the best rank-$q$ approximation.
By optimality, it holds that
\begin{equation} \label{eq:eckart-young}
	\varrho_q(\set{t}) \ge \tau_q \quad \text{for any tuple } \set{t}.
\end{equation}
Our goal is to bound the probability of the event $\varrho_k(\set{s}) \ge (1+\varepsilon) \tau_r$.

The probability that the pivot sequence $\set{s}$ selected by \RPCholesky is any given sequence $\set{t}$ is given by the expression
\begin{equation*}
	\prob \{ \set{s} = \set{t} \} = \frac{\mat{A}(t_1,t_1) - \Ahat^{(0)}_{\set{t}}(t_1,t_1)}{\varrho_0(\set{t})} \times \cdots \times \frac{\mat{A}(t_k,t_k) - \Ahat^{(k-1)}_{\set{t}}(t_k,t_k)}{\varrho_{k-1}(\set{t})}.
\end{equation*}
A standard connection between the Cholesky decomposition and determinants shows that the product of the numerators is
\begin{equation} \label{eq:numerator}
	[\mat{A}(t_1,t_1) - \Ahat^{(0)}_{\set{t}}(t_1,t_1)] \cdots [\mat{A}(t_k,t_k) - \Ahat^{(k-1)}_{\set{t}}(t_k,t_k)] = \det \mat{A}(\set{t},\set{t});
\end{equation}
see \cite[Prop.~3.20]{Epp25a}.
We do not have such an exact formula for the denominator, but we can bound it.
When the error at step $k$ is large $\varrho_k(\set{t}) \ge (1+\varepsilon) \tau_r$, the error on all previous steps $q$ is large as well: $\varrho_q(\set{t}) \ge (1+\varepsilon) \tau_r$.
Combining this with \cref{eq:eckart-young} yields the bound
\begin{equation} \label{eq:denominator}
	\varrho_0(\set{t})\cdots \varrho_{k-1}(\set{t}) \ge (1+\varepsilon)^{k-r} \cdot \tau_0^{\vphantom{k-r}}\cdots \tau_{r-1}^{\vphantom{k-r}}\tau_r^{k-r} \quad \text{when } \varrho_k(\set{t}) \ge (1+\varepsilon) \tau_r.
\end{equation}
Thus, summing over all index tuples $\set{t}$ that yield a large error $\varrho_k(\set{t}) \ge (1+\varepsilon) \tau_r$ and combining the numerator expression \cref{eq:numerator} with the denominator bound \cref{eq:denominator} gives
\begin{equation} \label{eq:prob-big-err}
	\prob \{ \varrho_k(\set{t}) \ge (1+\varepsilon) \tau_r \} \le \frac{\sum_{\set{t}} \det \mat{A}(\set{t},\set{t})}{(1+\varepsilon)^{k-r} \cdot \tau_0^{\vphantom{k-r}}\cdots \tau_{r-1}^{\vphantom{k-r}}\tau_r^{k-r}} = \frac{k! \, \mathrm{e}_k(\vec{\lambda})}{(1+\varepsilon)^{k-r} \cdot \tau_0^{\vphantom{k-r}}\cdots \tau_{r-1}^{\vphantom{k-r}}\tau_r^{k-r}}.
\end{equation}
In the equality statement, we use the fact that the sum over all determinants of $k\times k$ principal submatrices is the $k$th elementary symmetric polynomial
\begin{equation*}
	\mathrm{e}_k(\vec{\lambda}) = \sum_{i_1 < \cdots < i_k} \lambda_{i_1} \cdots \lambda_{i_k}
\end{equation*}
evaluated at the eigenvalues $\vec{\lambda}$ of $\mat{A}$; see\cite[Lem.~2.1]{GS12}.
Each submatrix is indexed by $k!$ different \emph{ordered} tuples, yielding an additional $k!$ factor in the numerator.

We now bound $\mathrm{e}_k(\vec{\lambda})$ using techniques similar to those employed in the volume sampling literature \cite{DRVW06,GS12,BBC20,HH20a}.
We begin with the bound
\begin{equation*}
	\mathrm{e}_k(\vec{\lambda}) \le \sum_{\substack{i_1 < \cdots < i_r \\ r+1 \le i_{r+1} < \cdots < i_k}} \lambda_{i_1} \cdots \lambda_{i_k} = \mathrm{e}_r(\vec{\lambda}) \mathrm{e}_{k-r}(\lambda_{r+1},\ldots,\lambda_N),
\end{equation*}
The first factor is bounded by 
\begin{equation*}
	\tau_0 \tau_1 \cdots \tau_{r-1} = (\lambda_1 + \cdots + \lambda_n) (\lambda_2 + \cdots + \lambda_n) \cdots (\lambda_r + \cdots + \lambda_n)
\end{equation*}
This is easily seen by expanding this product as a sum and noticing all $r$-tuples of eigenvalues appear at least once.
To bound the second factor, observe that every product of the eigenvalues $\lambda_{r+1},\ldots,\lambda_N$ occurs $(k-r)!$ times when
\begin{equation*}
	\tau_r^{k-r} = (\lambda_{r+1} + \cdots + \lambda_N)^{k-r}
\end{equation*}
is expanded.
Combining these insights, we deduce
\begin{equation*}
	\mathrm{e}_k(\vec{\lambda}) \le \mathrm{e}_r(\vec{\lambda}) \mathrm{e}_{k-r}(\lambda_{r+1},\ldots,\lambda_N)\le (\tau_0 \cdots \tau_{r-1}) \times \frac{\tau_r^{k-r}}{(k-r)!}.
\end{equation*}
Substituting into \cref{eq:prob-big-err} proves the theorem.

\section{Proof of \cref{cor:complexity}}
\label{sec:corollary}

First, we develop the high-probability bound \cref{eq:whp-result}.
Weakening \cref{thm:main} slightly, it suffices to choose $k$ such that
\begin{equation*}
	\frac{k^r}{(1+\varepsilon)^{k-r}} \le \delta \iff -\frac{\log(1+\varepsilon)k}{r} \e^{-\log(1+\varepsilon)k/r} \ge -\frac{\log(1+\varepsilon)\delta^{1/r}}{r(1+\varepsilon)}.
\end{equation*}
The solution to this inequality is
\begin{equation*}
	-\frac{\log(1+\varepsilon)k}{r} \le \mathrm{W}_{-1}\left( -\frac{\log(1+\varepsilon)\delta^{1/r}}{r(1+\varepsilon)} \right) \iff k \ge k_{\rm hp}(r,\varepsilon,\delta)
\end{equation*}
for
\begin{equation} \label{eq:k_hp}
	k_{\rm hp}(r,\varepsilon,\delta)\coloneqq-\frac{r}{\log(1+\varepsilon)} \mathrm{W}_{-1} \left( -\frac{\log(1+\varepsilon)\delta^{1/r}}{r(1+\varepsilon)} \right).
\end{equation}
Here, $\mathrm{W}_{-1}$ is the lower real branch of the Lambert-W function \cite{CGH+96}.
It satisfies the asymptotics $-\mathrm{W}_{-1}(-u) \sim \log(1/u)$ as $u\downarrow 0$.
Combining with the standard relation $\log(1+\varepsilon) \sim \varepsilon$, we conclude that
\begin{equation*}
	k_{\rm hp}(r,\varepsilon,\delta) \sim \frac{r}{\varepsilon} \log \left(\frac{r}{\varepsilon\delta^{1/r}} \right) = \frac{r}{\varepsilon} \log\left( \frac{r}{\varepsilon} \right) + \frac{1}{\varepsilon} \log \left( \frac{1}{\delta} \right) \quad \text{as } r\uparrow\infty \text{ and } \varepsilon,\delta \downarrow 0.
\end{equation*}
To obtain a bound, weaken \cite{Cha13} to obtain the numerical inequality $-\mathrm{W}_{-1}(-u) \le 2\log(1/u)$ for $u \le 1/\mathrm{e}$ and use that numerical inequalities $\varepsilon/2 \le \log(1+\varepsilon) \le \varepsilon$ for $\varepsilon \in (0,1)$ to obtain
\begin{equation*}
	k_{\rm hp}(r,\varepsilon,\delta) \le \frac{4r}{\varepsilon} \log \left( \frac{4r}{\varepsilon \delta^{1/r}} \right) = \frac{4r}{\varepsilon} \log\left( \frac{4r}{\varepsilon} \right) + \frac{4}{\varepsilon} \log \left( \frac{1}{\delta} \right).
\end{equation*}

Next, we treat the expectation bound \cref{eq:expectation-result}.
For any nonnegative random variable $x$, we have the identity $\expect[x] = \int_0^\infty \prob \{ x \ge u \} \, \d u$.
Using this identity and decomposing the integral into $u < u_\star$ and $u > u_\star$ for $u_\star \coloneqq [k!/(k-r)!]^{1/(k-r)} \tr(\mat{A} - \lowrank{\mat{A}}_r)$ and applying \cref{thm:main}, we obtain
\begin{align*}
	\expect\bigl[ \tr\bigl(\mat{A} - \Ahat^{(k)}\bigr) \bigr]
	&= \int_0^{u_\star} \prob \{ \tr(\mat{A} - \Ahat^{(k)}) \ge u \}\,\d u + \int_{u_\star}^\infty \prob \{ \tr(\mat{A} - \Ahat^{(k)}) \ge u \} \, \d u \\
	&\le u_\star + \frac{k!}{(k-r)!} \tr(\mat{A} - \lowrank{\mat{A}}_r)^{k-r} \int_{u_\star}^\infty u^{-(k-r)} \,\d u = \frac{k-r}{k-r-1}\left( \frac{k!}{(k-r)!} \right)^{\tfrac{1}{k-r}} \tr(\mat{A} - \lowrank{\mat{A}}_r).
\end{align*}
Invoking this bound at $k = r + \ell$ for some $\ell \ge 2$ and bounding $k!/(k-r)! \le k^r$ yields
\begin{equation*}
	\expect\bigl[ \tr\bigl(\mat{A} - \Ahat^{(r+\ell)}\bigr) \bigr] \le 2\left( r + \ell \right)^{\tfrac{r}{\ell}} \tr(\mat{A} - \lowrank{\mat{A}}_r).
\end{equation*}

To make further progress, we import the following result:
\begin{known}[\RPCholesky: Contraction bound]
	Let $\mat{A}$ be a psd matrix, let $r,k_1,k_2\ge 1$ be integers, let $\varepsilon \in (0,1)$, and let $\Ahat^{(t)}$ be the result of $t$ steps of \RPCholesky on $\mat{A}$ for each $t$.
	Then 
	\begin{equation*}
		\expect\bigl[\tr\bigl(\mat{A} - \Ahat^{(k_1+k_2)}\bigr)\bigr] \le (1+\varepsilon) \tr(\mat{A} - \lowrank{\mat{A}}_r) \quad \text{when } k_2\ge \frac{r}{\varepsilon} + r \log \left( \frac{\expect\tr\bigl(\mat{A} - \Ahat^{(k_1)}\bigr)}{\varepsilon \tr(\mat{A} - \lowrank{\mat{A}}_r)} \right).
	\end{equation*}
\end{known}
This result combines \cite[Lem.~5.4]{CETW25} and some consequences developed in the last paragraph of the proof of \cite[Thm.~5.1]{CETW25}.
Applying this result with $k = r+\ell$ shows that
\begin{equation*}
	\expect\bigl(\tr\bigl(\mat{A} - \Ahat^{(k)}\bigr)\bigr) \le (1+\varepsilon) \tr(\mat{A} - \lowrank{\mat{A}}_r)
\end{equation*}
when
\begin{align*}
	k \ge r + \ell + \frac{r}{\varepsilon} + r \log \left( \frac{\ell\left( r + \ell \right)^{\tfrac{r}{\ell}}}{(\ell-1)\varepsilon} \right)= \frac{r}{\varepsilon} + r \log \left(\frac{1}{\varepsilon} \right) + \left(1+\log \frac{\ell}{\ell-1}\right) r + \ell + \frac{r^2\log (r+\ell)}{\ell}.
\end{align*}
Assume $r\ge 6$, set $\ell \coloneqq \lceil r \sqrt{\log r} \rceil \le r\sqrt{\log r} + 1$, and simplify to obtain a threshold of 
\begin{equation*}
	k\ge \frac{r}{\varepsilon} + r\log \left(\frac{1}{\varepsilon}\right) + g(r) \quad \text{for } g(r) \coloneqq (1+\log 1.5)r + r \sqrt{\log r} + 1 + r \frac{\log(r + r \sqrt{\log r} + 1)}{\sqrt
		{\log r}}.
\end{equation*}
The function $h(r) \coloneqq \big(g(r) - 2r\sqrt{\log r}\,\big)/r$ is decreasing, so it satisfies $g(r) \le 2r\sqrt{\log r} + h(6)r < 2r\sqrt{\log r} + 2.3r$ for $r\ge 6$.
The stated complexity result also holds for $r\le5$ by appealing to \cref{known:rpcholesky}.

\section{Proof of \cref{thm:spectral}} \label{sec:spectral}

The basic idea of the proof will be as follows.
Introduce the ramp function $\psi(t) \coloneqq (t - \ridge/2)_+$, and consider the quantity $\tr \psi(\mat{A})$.
This quantity has two desirable properties.
First, it allows us to detect if $\mat{A}$ has an eigenvalue of size $\mu$ or larger.
Indeed, if $\mat{A}$ does have such an eigenvalue, then $\tr \psi(\mat{A}) \ge \ridge/2$.
Second, $\tr \psi(\mat{A})$ is insensitive to the number and size of any eigenvalue of $\mat{A}$ smaller than $\ridge/2$.
Taken together, these properties make $\tr \psi(\mat{A})$ an excellent tool to bound the spectral-norm error of $\mat{A} - \Ahat$, as it detects large eigenvalues but is insensitive to the number and size of sufficiently small eigenvalues.

Our first main lemma shows that this quantity of interest decreases exponentially as a function of the number of \RPCholesky steps.

\begin{lemma}[Ramp function of the many-step residual] \label{lem:ramp}
	Let $\mat{A}$ be a psd matrix, let $\mat{A}^{(k)} \coloneqq \mat{A} - \Ahat^{(k)}$ denote the $k$-step \RPCholesky residual, and introduce the ramp function $\psi(t) \coloneqq (t - \ridge/2)_+$.
	Then
	\begin{equation*}
		\expect \tr \big(\psi(\mat{A}^{(k)})\big) \le \exp\left(-\frac{\ridge}{2\tr \mat{A}}\cdot k\right) \tr \mat{A}.
	\end{equation*}
\end{lemma}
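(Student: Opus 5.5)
The plan is to prove a one-step contraction and then iterate. Write $\mat{B} \coloneqq \mat{A}^{(k)}$ for the current residual and $\mat{B}' \coloneqq \mat{A}^{(k+1)}$ for the residual after one more step. The target inequality is
\begin{equation*}
	\expect\bigl[\tr\psi(\mat{B}') \,\big|\, \mat{B}\bigr] \le \Bigl(1 - \frac{\ridge}{2\tr\mat{B}}\Bigr)\tr\psi(\mat{B}) \le \Bigl(1 - \frac{\ridge}{2\tr\mat{A}}\Bigr)\tr\psi(\mat{B}).
\end{equation*}
The second inequality is immediate because the trace of the residual never increases: $\tr\mat{B} \le \tr\mat{A}$. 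The lemma then follows by the tower property, induction on $k$, the bound $1-x \le \e^{-x}$, and the initial bound $\tr\psi(\mat{A}) \le \tr\mat{A}$, which holds since $\psi(t) \le t$ for $t \ge 0$.

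For the one-step bound, recall that one \RPCholesky step replaces $\mat{B}$ by the rank-one downdate $\mat{B}' = \mat{B} - \mat{B}\vec{e}_s\vec{e}_s^*\mat{B}/\mat{B}(s,s)$, where $s$ is drawn with probability $\mat{B}(s,s)/\tr\mat{B}$. The key identity is
\begin{equation*}
	\expect\bigl[\mat{B}\vec{e}_s\vec{e}_s^*\mat{B}/\mat{B}(s,s)\bigr] = \mat{B}^2/\tr\mat{B}.
\end{equation*}
I would lower-bound the decrease $\tr\psi(\mat{B}) - \tr\psi(\mat{B}')$ by a quantity that is \emph{linear} in the downdate $\vec{v}\vec{v}^*/\mat{B}(s,s)$, with $\vec{v} \coloneqq \mat{B}\vec{e}_s$. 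The natural candidate is $\vec{v}^*\mat{E}\vec{v}/\mat{B}(s,s)$, where $\mat{E}$ is the spectral projector of $\mat{B}$ onto its eigenvalues exceeding $\ridge/2$. Taking expectations with the identity above gives
\begin{equation*}
	\expect\bigl[\vec{v}^*\mat{E}\vec{v}/\mat{B}(s,s)\bigr] = \frac{\tr(\mat{E}\mat{B}^2)}{\tr\mat{B}} = \frac{1}{\tr\mat{B}}\sum_{\lambda_i > \ridge/2}\lambda_i^2 \ge \frac{\ridge}{2\tr\mat{B}}\sum_{\lambda_i>\ridge/2}\lambda_i \ge \frac{\ridge}{2\tr\mat{B}}\tr\psi(\mat{B}).
\end{equation*}
This is exactly the contraction factor required.

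The main obstacle is the deterministic inequality $\tr\psi(\mat{B}') \le \tr\psi(\mat{B}) - c\,\vec{v}^*\mat{E}\vec{v}/\mat{B}(s,s)$ with a usable constant $c$. Convexity of $\tr\psi$ points the wrong way: it only gives \emph{lower} bounds on $\tr\psi(\mat{B}')$. I would try two routes.

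\textbf{Variational route.} Use $\tr\psi(\mat{M}) = \max_{\mat{P}}\tr\bigl(\mat{P}(\mat{M} - (\ridge/2)\Id)\bigr)$ over orthogonal projectors $\mat{P}$, applied with $\mat{P} = \mat{E}'$, the spectral projector of $\mat{B}'$ onto its eigenvalues exceeding $\ridge/2$. This yields
\begin{equation*}
	\tr\psi(\mat{B}) - \tr\psi(\mat{B}') \ge \norm{\mat{E}'\vec{v}}^2/\mat{B}(s,s).
\end{equation*}
It then remains to relate $\mat{E}'\vec{v}$ to $\mat{E}\vec{v}$. Two facts should help here. First, $\mat{B}' \preceq \mat{B}$. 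Second, by Cauchy interlacing the downdate can push at most one eigenvalue from above $\ridge/2$ to below it. The single lost direction should be handled separately, by using that it contributes at most its full $\psi$-value, which is itself bounded by the corresponding component of $\vec{v}^*\mat{E}\vec{v}/\mat{B}(s,s)$.

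\textbf{Compression route.} If the first route leaves a loss, I would instead use the representation $\mat{B}' \cong (\Id-\mat{\Pi})\mat{B}(\Id-\mat{\Pi})$, where $\mat{\Pi}$ is the orthogonal projector onto $\mat{B}^{1/2}\vec{e}_s$ (the two matrices share their nonzero spectrum). Combined with interlacing and the eigenvalue-by-eigenvalue decrease of $\psi$, this may give the one-step inequality at the cost of a constant factor. Such a loss could be absorbed by the slack already present in the target rate $\ridge/(2\tr\mat{A})$.
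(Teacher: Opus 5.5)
Your outer reduction matches the paper's: the one-step contraction, $\tr\mat{B}\le\tr\mat{A}$, the tower property, $1-x\le\e^{-x}$, and $\tr\psi(\mat{A})\le\tr\mat{A}$. The gap is the one-step inequality you aim for, which is false, so neither route can close.

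\textbf{The proposed inequality is false.} Take $\mat{B} = b\,\Id_n$ with $b$ slightly larger than $\ridge/2$. For every pivot $s$ we have $\mat{B}' = b(\Id - \vec{e}_s\vec{e}_s^*)$. The decrease $\tr\psi(\mat{B}) - \tr\psi(\mat{B}')$ is therefore exactly $b-\ridge/2$, while $\vec{v}^*\mat{E}\vec{v}/\mat{B}(s,s) = b \approx \ridge/2$. Hence $\tr\psi(\mat{B}') \le \tr\psi(\mat{B}) - c\,\vec{v}^*\mat{E}\vec{v}/\mat{B}(s,s)$ fails for every fixed $c>0$. It fails even in expectation, since here everything is deterministic, so no constant-factor slack can rescue it. The projector $\mat{E}$ is the wrong weight: eigenvalues just above $\ridge/2$ carry almost no $\psi$-mass but full weight in $\mat{E}$.

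The two routes fare no better. Your variational bound $\norm{\mat{E}'\vec{v}}^2/\mat{B}(s,s)$ is valid, but it discards the mass of eigenvalues crossing the threshold; in this very example $\mat{E}'\vec{v}=\vec{0}$. Interlacing yields no quantitative, pivot-dependent decrease, so the compression route as you describe it also stalls.

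\textbf{The missing idea.} You need the right linear functional together with the right use of convexity. Your remark that convexity ``points the wrong way'' applies only to tangent-line bounds on $\mat{M}\mapsto\tr\psi(\mat{M})$; for compressions, convexity gives an upper bound. Set $\vec{u} \coloneqq \mat{B}^{1/2}\vec{e}_s/\sqrt{\mat{B}(s,s)}$ and $\mat{\Pi} \coloneqq \Id - \vec{u}\vec{u}^*$. Then $\mat{B}' = \mat{B}^{1/2}\mat{\Pi}\mat{B}^{1/2}$ has the same spectrum as $\mat{\Pi}\mat{B}\mat{\Pi}$, and Jensen's trace inequality for convex $f$ with $f(0)=0$ gives
\begin{equation*}
	\tr\psi(\mat{B}') = \tr\psi(\mat{\Pi}\mat{B}\mat{\Pi}) \le \tr\bigl(\mat{\Pi}\psi(\mat{B})\mat{\Pi}\bigr) = \tr\psi(\mat{B}) - \frac{[\mat{B}\psi(\mat{B})](s,s)}{\mat{B}(s,s)}.
\end{equation*}
For an elementary proof, expand $\mat{\Pi}\mat{B}\mat{\Pi}$ in orthonormal eigenvectors $\vec{w}_j$ spanning the range of $\mat{\Pi}$. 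Apply scalar Jensen, $\psi(\vec{w}_j^*\mat{B}\vec{w}_j)\le\vec{w}_j^*\psi(\mat{B})\vec{w}_j$, and sum; the direction $\vec{u}$ contributes $\psi(0)=0$.

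Averaging over $s$ gives an expected decrease of $\tr(\mat{B}\psi(\mat{B}))/\tr\mat{B}$. This is at least $(\ridge/2)\tr\psi(\mat{B})/\tr\mat{B}$ because $t\psi(t)\ge(\ridge/2)\psi(t)$.

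In your notation, the correct weight is $\mat{B}^{+}\psi(\mat{B}) = \sum_{\lambda_i>\ridge/2}\bigl(1-\ridge/(2\lambda_i)\bigr)\mat{P}_i \preceq \mat{E}$, where $\mat{P}_i$ are the spectral projectors of $\mat{B}$. In the example above it gives exactly the true decrease $b-\ridge/2$.
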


To prove this lemma, we develop a single-step bound for the expected trace of any convex function of the \RPCholesky residual.
This result can be seen as a generalization of the exact formula for the trace error of \RPCholesky \cite[sec.~5.1]{CETW25} to higher powers.

\begin{lemma}[Convex function of the single-step residual] \label{lem:convex}
	Let $\mat{A}$ be a psd matrix, let $\mat{A}'$ be the residual after applying one step of \RPCholesky, and let $f : \real_+ \to \real_+$ be a convex function satisfying $f(0) = 0$.
	Then
	\begin{equation*}
		\expect \tr f(\mat{A}') \le \tr f(\mat{A}) - \frac{\tr(\mat{A}f(\mat{A}))}{\tr \mat{A}}.
	\end{equation*}
\end{lemma}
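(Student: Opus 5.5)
The plan is to write the one-step residual as a compression of $\mat{A}$ by a rank-one-deficient orthogonal projection. Then apply a Jensen-type trace inequality for convex functions, and finally average over the random pivot.

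\emph{Step 1: Projection form of the residual.} Let $s$ be the pivot. It is drawn with probability $\mat{A}(s,s)/\tr\mat{A}$, so almost surely $\mat{A}(s,s) > 0$. Define the unit vector $\vec{u} \coloneqq \mat{A}^{1/2}\vec{e}_s/\sqrt{\mat{A}(s,s)}$ and the projection $\mat{Q} \coloneqq \Id - \vec{u}\vec{u}^*$. Then
\begin{equation*}
	\mat{A}' = \mat{A} - \frac{\mat{A}\vec{e}_s\vec{e}_s^*\mat{A}}{\mat{A}(s,s)} = \mat{A}^{1/2}\mat{Q}\mat{A}^{1/2} = \mat{X}\mat{X}^* \quad \text{with } \mat{X} \coloneqq \mat{A}^{1/2}\mat{Q}.
\end{equation*}
The matrices $\mat{X}\mat{X}^*$ and $\mat{X}^*\mat{X} = \mat{Q}\mat{A}\mat{Q}$ have the same eigenvalues. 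Hence $\tr f(\mat{A}') = \tr f(\mat{Q}\mat{A}\mat{Q})$.

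\emph{Step 2: Jensen trace inequality (main step).} I claim that $\tr f(\mat{Q}\mat{A}\mat{Q}) \le \tr(\mat{Q} f(\mat{A})\mat{Q})$ for convex $f$ with $f(0)=0$. This is a special case of the Brown--Kosaki/Hansen--Pedersen trace Jensen inequality, but I would prove it directly. Since $\mat{Q}\mat{A}\mat{Q}\vec{u} = \vec{0}$, we can choose an orthonormal eigenbasis of $\mat{Q}\mat{A}\mat{Q}$ of the form $\{\vec{u}, \vec{w}_1,\ldots,\vec{w}_{N-1}\}$, with each $\vec{w}_j \in \operatorname{range}\mat{Q}$. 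The eigenvalue at $\vec{u}$ is $0$, so it contributes $f(0)=0$. Each remaining eigenvalue is $\vec{w}_j^*\mat{A}\vec{w}_j$. Write $\mat{A} = \sum_i \lambda_i \vec{q}_i\vec{q}_i^*$. The weights $p_i \coloneqq |\vec{q}_i^*\vec{w}_j|^2$ form a probability vector, so scalar Jensen gives
\begin{equation*}
	f\bigl(\vec{w}_j^*\mat{A}\vec{w}_j\bigr) = f\Bigl(\sum_i p_i\lambda_i\Bigr) \le \sum_i p_i f(\lambda_i) = \vec{w}_j^* f(\mat{A})\vec{w}_j.
\end{equation*}
Summing over $j$ gives
\begin{equation*}
	\tr f(\mat{Q}\mat{A}\mat{Q}) \le \sum_j \vec{w}_j^* f(\mat{A})\vec{w}_j = \tr(\mat{Q}f(\mat{A})) = \tr f(\mat{A}) - \vec{u}^* f(\mat{A})\vec{u}.
\end{equation*}
The only subtle point is the treatment of the $\vec{u}$ direction, and that is exactly where the hypothesis $f(0)=0$ enters.

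\emph{Step 3: Averaging over the pivot.} Using the sampling probabilities,
\begin{equation*}
	\expect\bigl[\vec{u}^* f(\mat{A})\vec{u}\bigr] = \sum_{s} \frac{\mat{A}(s,s)}{\tr \mat{A}} \cdot \frac{\vec{e}_s^*\mat{A}^{1/2}f(\mat{A})\mat{A}^{1/2}\vec{e}_s}{\mat{A}(s,s)} = \frac{\tr\bigl(\mat{A}^{1/2}f(\mat{A})\mat{A}^{1/2}\bigr)}{\tr\mat{A}} = \frac{\tr(\mat{A}f(\mat{A}))}{\tr\mat{A}}.
\end{equation*}
Here the indices $s$ with $\mat{A}(s,s)=0$ are omitted, since they are never sampled. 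Their omission is harmless because $\mat{A}^{1/2}\vec{e}_s = \vec{0}$ for such $s$, so they contribute nothing to the trace on the right.

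Combining Steps 1–3 gives the lemma. I expect Step 2 to be the only nonroutine point, specifically the observation that $\vec{u}$ can be taken as one of the eigenvectors of $\mat{Q}\mat{A}\mat{Q}$.
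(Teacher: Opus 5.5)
Your proof is correct and follows the paper's argument. Both use the projection formula $\mat{A}' = \mat{A}^{1/2}\mat{Q}\mat{A}^{1/2}$, pass to $\mat{Q}\mat{A}\mat{Q}$, apply the Jensen trace inequality $\tr f(\mat{Q}\mat{A}\mat{Q}) \le \tr(\mat{Q}f(\mat{A})\mat{Q})$, and average $\vec{u}^*f(\mat{A})\vec{u}$ over the pivot. The only difference is that you prove the Jensen step directly via an eigenbasis of $\mat{Q}\mat{A}\mat{Q}$ containing $\vec{u}$, where the paper cites Hansen--Pedersen; this has the small merit of making explicit that $f(0)=0$ is exactly what handles the null direction $\vec{u}$.
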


\begin{proof}
	Let $s$ denote the randomly selected pivot.
	Our starting point is a standard tool in the theory of partial Cholesky (Nystr\"om) approximations, the projection formula \cite[p.~6]{Git11}:
	\begin{equation*}
		\mat{A}' = \mat{A}^{1/2}\mat{\Pi}\mat{A}^{1/2} \quad \text{for } \mat{\Pi} \coloneqq \Id - \vec{u}\vec{u}^* \text{ and } \vec{u} = \mat{A}^{1/2}(:,s)/\sqrt{\mat{A}(s,s)}.
	\end{equation*}
	The matrices $\mat{A}'$ and $\mat{\Pi}\mat{A}\mat{\Pi}$ have the same eigenvalues, so applying Jensen's trace inequality \cite{HP03} gives
	\begin{equation*}
		\tr f(\mat{A}') = \tr f(\mat{\Pi}\mat{A}\mat{\Pi}) \le \tr(\mat{\Pi}f(\mat{A})\mat{\Pi}) = \tr f(\mat{A}) - \frac{[\mat{A}f(\mat{A})](s,s)}{\mat{A}(s,s)}.
	\end{equation*}
	Here, we have used the fact that $f(\mat{A})$ and $\mat{A}^{1/2}$ commute.
	Taking the expectation over the randomly selected index $s$ gives the result.
\end{proof}

\begin{proof}[Proof of \cref{lem:ramp}]
	Apply \cref{lem:convex} conditionally to obtain
	\begin{equation*}
		\expect \big[ \tr \big(\psi(\mat{A}^{(k+1)}\big) \mid \mat{A}^{(k)}\big] \le \tr \psi(\mat{A}^{(k)}) - \frac{\tr(\mat{A}^{(k)}\psi(\mat{A}^{(k)}))}{\tr \mat{A}^{(k)}} \le \tr \psi(\mat{A}^{(k)}) - \frac{\ridge \tr(\psi(\mat{A}^{(k)}))}{2\tr \mat{A}}.
	\end{equation*}
	In the second bound, we note that the trace residual of \RPCholesky is decreasing and observe that, since $\psi(t)$ is nonzero only when $t\ge \ridge/2$, we have the bound $t \psi(t) \ge (\ridge/2)\psi(t)$.
	Taking the expectation of both sides yields the recurrence
	\begin{equation*}
		\expect \tr \big(\psi(\mat{A}^{(k+1)})\big) \le \expect \tr \psi(\mat{A}^{(k)}) - \frac{\ridge \expect \tr(\psi(\mat{A}^{(k)}))}{2 \tr \mat{A}}.
	\end{equation*}
	Solving this recurrence with the initial condition $\tr \psi(\mat{A}^{0}) = \tr \psi(\mat{A}) \le \tr\mat{A}$ and using the numeric inequality $1-u \le \e^{-u}$ yields the stated bound.
\end{proof}

\Cref{lem:ramp} shows that the quantity of interest $\tr \psi(\mat{A})$ decreases at an exponential rate depending on the ratio $\ridge / \tr \mat{A}$.
Unfortunately, the rate will be very slow if $\tr \mat{A} \gg \ridge$.
To address this, we can use a warm start argument to show that the trace-residual becomes small after a small burn-in period.

\begin{proposition}[Warm start] \label{prop:warm}
	Let $\mat{A}$ be a psd matrix, and let $\delta \in (0,1)$.
	After running for $$k\ge 4\lceil 2\deff\rceil \log (4\lceil 2\deff\rceil) + 4\log(1/\delta)$$ steps, \RPCholesky achieves the bound $\tr(\mat{A} - \Ahat^{(k)}) \le 4\ridge\deff$ with probability at least $1-\delta$.
\end{proposition}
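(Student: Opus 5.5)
The plan is to obtain \cref{prop:warm} as a direct consequence of \cref{thm:main}, applied with $\varepsilon = 1$ and a rank $r$ tied to the effective dimension, namely $r \coloneqq \lceil 2\deff \rceil$. There are three steps: a deterministic bound on the optimal tail error $\tau_r = \tr(\mat{A} - \lowrank{\mat{A}}_r)$ in terms of $\ridge\deff$, the probability estimate from \cref{thm:main}, and a check that the stated number of steps makes that estimate at most $\delta$.

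\emph{Step 1: deterministic tail bound.} Every eigenvalue $\lambda_i(\mat{A}) \ge \ridge$ contributes at least $1/2$ to $\deff$. So there are at most $2\deff \le r$ such eigenvalues, and hence $\lambda_i(\mat{A}) < \ridge$ for all $i > r$. For $0 \le \lambda < \ridge$ we have $\lambda + \ridge \le 2\ridge$, which gives $\lambda \le 2\ridge \cdot \lambda/(\lambda+\ridge)$. Summing over $i > r$ yields
\begin{equation*}
	\tau_r = \sum_{i>r} \lambda_i(\mat{A}) \le 2\ridge \sum_{i>r} \frac{\lambda_i(\mat{A})}{\lambda_i(\mat{A})+\ridge} \le 2\ridge\deff.
\end{equation*}
The degenerate case $\deff = 0$ forces $\mat{A} = \mat{0}$, where the claim is trivial.

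\emph{Step 2: probability via \cref{thm:main}.} With $\varepsilon = 1$, \cref{thm:main} gives
\begin{equation*}
	\prob\bigl\{\tr(\mat{A}-\Ahat^{(k)}) \ge 2\tau_r\bigr\} \le \frac{k!}{(k-r)!}\, 2^{-(k-r)} \le k^r 2^{-(k-r)}.
\end{equation*}
On the complementary event, Step 1 gives $\tr(\mat{A}-\Ahat^{(k)}) < 2\tau_r \le 4\ridge\deff$.

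\emph{Step 3: choice of $k$ (the main obstacle).} It remains to show that $k \ge 4r\log(4r) + 4\log(1/\delta)$ forces $k^r 2^{-(k-r)} \le \delta$. This is precisely the calculation behind the bound $k_{\rm hp}(r,\varepsilon,\delta) \le \tfrac{4r}{\varepsilon}\log\tfrac{4r}{\varepsilon} + \tfrac{4}{\varepsilon}\log\tfrac1\delta$ in the proof of \cref{cor:complexity}, evaluated at $\varepsilon = 1$. That corollary is stated for $\varepsilon \in (0,1)$, so the only point needing care is the endpoint. The inequalities used there, namely $\varepsilon/2 \le \log(1+\varepsilon) \le \varepsilon$ and the Lambert-W bound (whose argument still satisfies $u \le 1/\e$), remain valid at $\varepsilon = 1$. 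Since $k^r 2^{-(k-r)}$ is eventually decreasing in $k$, the inequality continues to hold for all larger $k$.

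If the endpoint argument feels fragile, a direct fallback is to verify $r\log k \le (k-r)\log 2 - \log(1/\delta)$. Write $k = 4r\log(4r) + 4\log(1/\delta) + j$ with $j \ge 0$, use $\log k \le \log(4r) + \log\log(4r)$ plus a correction when $\delta$ is small, and absorb the slack into the factor $4 \log 2 - 1 > 1$.
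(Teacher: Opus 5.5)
Your proposal is correct and follows the paper's proof: the same bound $\tau_r \le 2\ridge\deff$ for $r = \lceil 2\deff\rceil$, obtained by counting eigenvalues of size at least $\ridge$, followed by the high-probability bound of \cref{thm:main} (as packaged in \cref{eq:whp-result}) with $\varepsilon = 1$. Your extra checks tidy up points the paper passes over silently: that the numerical inequalities behind $k_{\rm hp}$ still hold at the endpoint $\varepsilon = 1$, and that the case $\deff = 0$ is trivial.
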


\begin{proof}
	We begin with a standard algebraic manipulation involving the effective dimension.
	In the sum
	\begin{equation*}
		\deff = \frac{\lambda_i(\mat{A})}{\lambda_i(\mat{A}) + \ridge},
	\end{equation*}
	each summand with $\lambda_i(\mat{A}) \ge \ridge$ is at least $1/2$.
	Thus, there are at most $2\deff$ eigenvalues greater than $\ridge$, and thus, for any $i > 2\deff$, it holds that
	\begin{equation*}
		\lambda_i(\mat{A}) \le \ridge \le \frac{2\ridge \lambda_i(\mat{A})}{\lambda_i(\mat{A}) + \ridge}.
	\end{equation*}
	Thus, for $r\coloneqq \lceil 2\deff\rceil$, it holds that
	\begin{equation*}
		\tr(\mat{A} - \lowrank{\mat{A}}_r) = \sum_{i = r+1}^N \lambda_i(\mat{A}) \le \sum_{i=r+1}^N \frac{2\ridge \lambda_i(\mat{A})}{\lambda_i(\mat{A}) + \ridge} \le 2\ridge\, \deff.
	\end{equation*}
	By \cref{eq:whp-result} with $\varepsilon = 1$, the stated guarantee holds provided that $k \ge 2\lceil 2\deff\rceil \log (2\lceil 2\deff\rceil) + 4\log(1/\delta)$.
\end{proof}

Combining these ingredients, we prove the paper's second main result.

\begin{proof}[Proof of \cref{thm:spectral}]
	Introduce the ramp function $\psi(t) \coloneqq (t - \ridge/2)_+$.
	By the comments at the start of this section, it is sufficient to show that $\tr \psi(\mat{A} - \Ahat^{(k)}) \le \ridge/2$ with probability $1-\delta$.
	Set
	\begin{equation*}
		k_1 \coloneqq \lceil 4\lceil 2\deff\rceil \log (4\lceil 2\deff\rceil) + 4\log(2/\delta)\rceil \quad \text{and} \quad k_2 \coloneqq \lceil 8\deff \log(16 \deff/\delta)\rceil.
	\end{equation*}
	\Cref{prop:warm} tells us that, after $k_1$ steps, we obtain the bound $\tr(\mat{A} - \Ahat^{(k_1)}) \le 4\ridge\deff$ with probability at least $1-\delta/2$.
	Call this event $\mathcal{E}$.
	Applying \cref{lem:ramp} conditionally on $\mathcal{E}$, we see that
	\begin{equation*}
		\expect [\tr \psi(\mat{A} - \Ahat^{(k_1+k_2)}) \mid \mathcal{E}] \le 4\ridge\deff\exp\left(-\frac{k_2}{8\deff} \right)  \le \frac{\delta\mu}{4}
	\end{equation*}
	Thus, by Markov's inequality
	\begin{equation*}
		\prob \{ \tr \psi(\mat{A} - \Ahat^{(k_1+k_2)}) > \ridge/2\} \le \prob (\mathcal{E}^{\mathsf{c}}) + \prob \{ \tr \psi(\mat{A}^{(k_1+k_2)}) > \ridge/2 \mid \mathcal{E} \} \le  \frac{\delta}{2} + \frac{\delta}{2} = \delta.
	\end{equation*}
	
	Lastly, we determine the complexity.
	First, note that we are free to assume $\deff > 1/2$, since even the zero matrix is a $\ridge$-spectral approximation when $\deff \le 1/2$.
	From there, a tiny bit of case work proves $k_1 + k_2 \le 30\deff \log(16\deff/\delta)$, completing the argument.
\end{proof}


\subsection*{Acknowledgements}

The author thanks Chris Cama\~no, Aidan Epperly, Raphael Meyer, Yuji Nakatsukasa, Joel Tropp, and Robert Webber for valuable discussions.
This work was supported by the Miller Institute for Research in Basic Research in Science at the University of California, Berkeley, and the author used AI resources provided by Lawrence Berkeley National Lab.

\appendix


\scriptsize
\bibliographystyle{halpha}
\let\oldthebibliography=\thebibliography
\let\endoldthebibliography=\endthebibliography
\renewenvironment{thebibliography}[1]%
  {\begin{oldthebibliography}{#1}%
   \setlength{\itemsep}{2pt}%
   \setlength{\parskip}{2pt}%
  }%
  {\end{oldthebibliography}}
\bibliography{otherrefs}

\end{document}